\documentclass[11pt,reqno]{amsart}
\setlength{\textheight}{23cm} \setlength{\textwidth}{16cm}
\setlength{\oddsidemargin}{0cm} \setlength{\evensidemargin}{0cm}
\setlength{\topmargin}{0cm}
\usepackage{amsmath,amssymb}
\usepackage{cite}

 \makeatletter
 \oddsidemargin.5in
 \evensidemargin  \oddsidemargin
 \marginparwidth1.9375in
 \makeatother

\textwidth 14cm
\textheight 21cm
\pagestyle{myheadings}

\newtheorem{theorem}{Theorem}[section]
\newtheorem{lemma}[theorem]{Lemma}
\newtheorem{proposition}[theorem]{Proposition}

\theoremstyle{definition}
\newtheorem{definition}[theorem]{Definition}
\newtheorem{example}[theorem]{Example}

\theoremstyle{remark}

\numberwithin{equation}{section}

 \newtheorem*{theorem*}{Theorem}

 \setcounter{section}{0}
 \numberwithin{equation}{section}
\setcounter{section}{0}
 \numberwithin{equation}{section}
\makeatletter
\@namedef{subjclassname@2020}{%
  \textup{2020} Mathematics Subject Classification}
\makeatother

\begin{document}
\title[An extension of the weighted geometric mean]{An extension of the weighted geometric mean in unital $JB$-algebras}
\author[A. G. Ghazanfari]{A. G. Ghazanfari$^{1}$ and S. Malekinejad$^{2,*}$ }

 \setcounter{section}{0}
 \numberwithin{equation}{section}

\address{$^{1}$Department of Mathematics\\
Lorestan University, Khoramabad, Iran.}

\address{$^{2}$Department of Mathematics\\
Payame Noor University, Tehran, Iran.}

\thanks{*Corresponding author}

\email{$^{1}$ghazanfari.a@lu.ac.ir, $^{2}$maleki60313@pnu.ac.ir }

\subjclass[2020]{46H70, 47A63, 47A64}

\keywords{$JB$-algebra, Heinz mean, Heron mean}

\begin{abstract}\noindent
Let $\mathcal{A}$ be a unital $JB$-algebra and $A,~B\in\mathcal{A}$, we extend the weighted geometric mean $A\sharp_r B$, from $r\in [0,1]$ to $r\in (-1, 0)\cup(1, 2)$.
We will notice that many results will be reversed when the domain of $r$ change from $[0,1]$ to $(-1,0)$ or $(1,2)$.
We investigate some property of $A\sharp_r B$ for such quantities of $r$, such as
 we show that $A\sharp_r B$ is separately operator convex with respect to $A, B$.
We also introduce the Heinz and Heron means for unital $JB$-algebras and
give some famous inequalities involving them.
\end{abstract}
\maketitle

\section{introduction and preliminary}\vspace{.2cm} \noindent

Jordan algebras are considered as a model to formalize the concept of an algebra of observables in quantum mechanics.
In the mathematical foundation of quantum physic one of the natural axioms
 is that the observables form a jordan algebra.
One reason for this is that, typically, observables are supposed to
be self-adjoint operators on a Hilbert space, and the space of such operators
is closed under the Jordan product but not the associative product. We refer the reader to \cite{emc} for further details.

A Jordan algebra over $\mathbb{R}$ is a vector space $\mathcal{A}$ over $\mathbb{R}$
equipped with a commutative bilinear product $\circ$ that satisfies the identity
\begin{align*}
a\circ(b\circ a^2)=(a\circ b)\circ a^2,
\end{align*}
for all $a,b\in\mathcal{A}$.

Let $\mathcal{A}$ be an algebra and $a,b\in\mathcal{A}$. Let
\begin{align}\label{1}
x\circ y=\frac{x y+y x}{2}.
\end{align}
Then $\circ$ defines a bilinear, commutative product on $\mathcal{A}$, which is called the Jordan product.
If $\mathcal{A}$ is associative,
then  $\mathcal{A}$ becomes a Jordan
algebra when equipped with the product \eqref{1}, as does any subspace closed
under $\circ$.
Such Jordan algebras are called special Jordan algebras, all others are called exceptional.
The following algebras are examples of special Jordan algebras with product \eqref{1}.
 \begin{example}
 \end{example}
 \begin{itemize}
\item
  The Jordan algebra of $n\times n$ self-adjoint real matrices $H_n(\mathbb{R})$.
  \item
  The Jordan algebra of $n\times n$ self-adjoint complex  matrices $H_n(\mathbb{C})$.
\item
 The Jordan algebra of $n\times n$ self-adjoint quaternionic matrices $H_n(\mathbb{H})$.
\item The Jordan algebra of $n\times n$ self-adjoint octonionic matrices $H_n(\mathbb{O})$, where $n\leq3$.
\end{itemize}


\begin{definition}
A Jordan Banach algebra is a real Jordan algebra $\mathcal{A}$ equipped with a complete norm satisfying
$$\|A\circ B\|\leq \|A\|\|B\|,~~~~A,B\in\mathcal{A}.$$
Jordan operator algebras are norm-closed spaces of operators on a Hilbert space which are closed under the Jordan product.

Basic examples are real symmetric and complex hermitian matrices with the Jordan product.
A $JB$-algebra is a Jordan Banach algebra $\mathcal{A}$ in which the norm satisfies the following two additional conditions for $A,B\in\mathcal{A}$:
\begin{align*}
(i&)~\|A^2\|=\|A\|^2\\
(ii&)\|A^2\|\leq\|A^2+B^2\|
\end{align*}
\end{definition}
We say $A\in \mathcal{A}$ is invertible if there exists $B\in \mathcal{A}$, which is called Jordan inverse of
$A$, such that
$$A \circ B = I~~~\text{and}~~A^2 \circ B = A.$$
The spectrum of $A$, denoted by $Sp(A)$, is the set of $\lambda\in \mathbb{R}$  such that $A -\lambda $ does not
have inverse in $A$ . Furthermore, if $Sp(A)\subset[0,\infty)$, we say $A$ is positive, denoted
$A \geq 0$.\\
In a $JB$-algebra we define
\begin{equation*}
U_AB = \{ABA\} := 2(A \circ B) \circ A - A^2 \circ B.
\end{equation*}
Note that $ABA$ is meaningless unless $A$ is special, in which case $\{ABA\} = ABA$.
Moreover, if $B \geq 0$, then $U_AB =\{ABA\}\geq 0$.

We mention some of properties of $U_A$ that we will use frequently in sequel:
$U_A$ is a linear mapping and
\begin{align}\label{2}
 U_{\{ABA\}}=U_AU_BU_A.
\end{align}
It also satisfies the following two Lemmas:
\begin{lemma}\cite[Lemma 1.23]{1}\label{l1}
Let $\mathcal{A}$ be a $JB$-Banach algebra and $A\in\mathcal{A}$. Then $A$ is an invertible element iff $U_A$
has a bounded inverse, and in this case the inverse map is $U_{a^{-1}}$ i.e., $U^{-1}_{A}=U_{A^{-1}}$.
\end{lemma}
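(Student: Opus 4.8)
The plan is to prove both implications by playing the fundamental identity \eqref{2} against the associative functional calculus that is available for a single element of a $JB$-algebra. For the forward implication, assume $A$ is invertible with Jordan inverse $A^{-1}$, and set $A^{-2}:=(A^{-1})^2$. First I would record the three elementary identities $U_A I = A^2$, $U_A A^{-1} = A$, and $U_A A^{-2} = I$. Each follows directly from $U_A B = 2(A\circ B)\circ A - A^2\circ B$ once one uses the defining relations $A\circ A^{-1}=I$ and $A^2\circ A^{-1}=A$ together with the fact that $A,A^{-1},I$ generate an associative commutative subalgebra, in which $A\circ A^{-2}=A^{-1}$ and $A^2\circ A^{-2}=I$; for instance $U_A A^{-1}=2(A\circ A^{-1})\circ A - A^2\circ A^{-1}=2A-A=A$.

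Writing $P=U_A$ and $Q=U_{A^{-1}}$, I would then feed these into \eqref{2}. Since $U_A A^{-1}=A$, applying $U$ gives $U_A U_{A^{-1}} U_A = U_{U_A A^{-1}} = U_A$, that is $PQP=P$. Since $U_A A^{-2}=I$ and $U_I=\mathrm{id}$, applying $U$ gives $U_A U_{A^{-2}} U_A = \mathrm{id}$; and because $A^{-2}=U_{A^{-1}}I$, the identity \eqref{2} yields $U_{A^{-2}}=U_{A^{-1}}U_I U_{A^{-1}}=Q^2$, so that $PQ^2P=\mathrm{id}$. The relation $PQ^2P=\mathrm{id}$ exhibits $PQ^2$ as a left inverse and $Q^2P$ as a right inverse of $P$, whence $P=U_A$ is a bounded bijection and has bounded inverse by the open mapping theorem. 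Substituting invertibility of $P$ into $PQP=P$ gives $QP=PQ=\mathrm{id}$, i.e. $U_{A^{-1}}=U_A^{-1}$, which is the asserted formula. For the converse, assume $U_A$ has a bounded inverse and pass to the commutative subalgebra $C(A)$ generated by $A$ and $I$, which is isometrically isomorphic to $C(Sp(A))$; under this identification $U_A$ restricts to multiplication by the function $t\mapsto t^2$. If $A$ were not invertible then $0\in Sp(A)$, and choosing norm-one elements $e_n\in C(A)$ supported in shrinking neighbourhoods of $0$ produces $\|e_n\|=1$ but $\|U_A e_n\|=\|t^2 e_n\|\to 0$, contradicting that $U_A$ is bounded below. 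Hence $0\notin Sp(A)$ and $A$ is invertible.

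The forward direction is essentially routine operator algebra once the three $U_A$-relations are in hand, and boundedness of $U_{A^{-1}}$ is automatic since every $U$-operator is a bounded combination of Jordan multiplications. The step I expect to demand the most care is justifying, in both directions, the functional-calculus facts for a singly generated subalgebra, namely that $C(A)\cong C(Sp(A))$ with the Jordan product becoming ordinary multiplication, so that the relations $A\circ A^{-2}=A^{-1}$, $A^2\circ A^{-2}=I$ and the restriction ``$U_A$ acts as multiplication by $t^2$'' are all legitimate. An alternative, purely algebraic treatment of the converse would set $B:=U_A^{-1}A$ and verify $A\circ B=I$ and $A^2\circ B=A$ directly from \eqref{2}; this avoids spectral theory but moves the real difficulty into that verification, which is the genuinely delicate point where I would concentrate the effort.
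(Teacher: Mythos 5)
The paper offers no proof of this lemma at all: it is quoted verbatim from \cite[Lemma 1.23]{1}, so there is nothing internal to compare your argument against; I will assess it on its own terms and against the standard reference proof. Your forward direction is essentially the textbook argument and is correct: from $U_AA^{-1}=A$ and $U_AA^{-2}=I$ the fundamental identity \eqref{2} gives $U_AU_{A^{-1}}U_A=U_A$ and $U_AU_{A^{-1}}^2U_A=U_I=\mathrm{id}$ (using $A^{-2}=U_{A^{-1}}I$), and the two-sided-inverse bookkeeping then forces $U_{A^{-1}}=U_A^{-1}$; note the open mapping theorem is not needed, since $U_{A^{-1}}^2U_A=U_AU_{A^{-1}}^2$ is already a bounded two-sided inverse. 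Your converse is where you genuinely depart from the references: Alfsen--Shultz and Hanche-Olsen--St{\o}rmer argue purely algebraically by setting $B:=U_A^{-1}A$ and verifying $A\circ B=I$, $A^2\circ B=A$ (an argument valid in any unital Jordan algebra), whereas you exploit the Banach structure, identifying the closed subalgebra $C(A)$ with $C(Sp(A))$, on which $U_A$ acts as multiplication by $t^2$, and using bump functions at $0$ to contradict boundedness below. That route is correct and arguably more transparent in the $JB$-setting, at the price of invoking the full functional calculus rather than only Jordan identities. The one real dependency you correctly flag is the associativity of the subalgebra generated by $A$, $A^{-1}$, $I$ (needed for $A\circ A^{-2}=A^{-1}$ and $A^2\circ A^{-2}=I$, and for ``$U_A$ acts as $t^2$''); this is a consequence of Macdonald's theorem with inverses (or of spectral permanence plus $A^{-1}\in C(A)$), which is nontrivial but is exactly the tool the paper itself already uses in the proof of Lemma \ref{3.3}, so relying on it here is consistent with the paper's toolkit. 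With that input granted, your proof is complete.
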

\begin{lemma}\cite[Lemma 1.24]{1}\label{l2}
If $A$ and $B$ are invertible elements of a JB-algebra, Then $\{ABA\}$ is invertible with inverse $\{A^{-1}B^{-1}A^{-1}\}$.
\end{lemma}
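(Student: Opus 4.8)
The plan is to reduce everything to the fundamental formula \eqref{2} together with the inversion rule supplied by Lemma \ref{l1}. First I would rewrite $\{ABA\} = U_A B$ and record the key recovery formula for the Jordan inverse: for any invertible $X$ one has $X^{-1} = U_X^{-1} X = U_{X^{-1}} X$. This is the standard inverse identity in Jordan theory, and it can also be read off from the defining relations: expanding $U_{X^{-1}} X = 2(X^{-1}\circ X)\circ X^{-1} - (X^{-1})^2 \circ X$ and using $X^{-1}\circ X = I$ together with $(X^{-1})^2\circ X = X^{-1}$ collapses the right-hand side to $2X^{-1} - X^{-1} = X^{-1}$. I emphasize this formula because it recovers the element itself, not merely its square.

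Next I would settle invertibility of $\{ABA\}$. By \eqref{2}, $U_{\{ABA\}} = U_A U_B U_A$. Since $A$ and $B$ are invertible, Lemma \ref{l1} gives that $U_A$ and $U_B$ are bounded invertible, with $U_A^{-1} = U_{A^{-1}}$ and $U_B^{-1} = U_{B^{-1}}$. Hence the composition $U_A U_B U_A$ is bounded invertible with inverse $U_{A^{-1}} U_{B^{-1}} U_{A^{-1}}$, so $U_{\{ABA\}}$ has a bounded inverse. The converse direction of Lemma \ref{l1} then yields that $\{ABA\}$ is invertible.

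To identify the inverse, I would apply the recovery formula to $C := \{ABA\} = U_A B$:
\[
C^{-1} = U_C^{-1} C = U_{A^{-1}} U_{B^{-1}} U_{A^{-1}}\, U_A B .
\]
Because $U_{A^{-1}} U_A = U_A^{-1} U_A = \mathrm{id}$, the adjacent pair telescopes, leaving $C^{-1} = U_{A^{-1}} U_{B^{-1}} B$. Applying the recovery formula once more, this time to $B$, gives $U_{B^{-1}} B = B^{-1}$, whence $C^{-1} = U_{A^{-1}} B^{-1} = \{A^{-1} B^{-1} A^{-1}\}$, which is the asserted inverse.

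I expect the only genuine subtlety to lie in the recovery formula $X^{-1} = U_{X^{-1}} X$. Since $U$ is quadratic in its subscript (so $U_X = U_{-X}$), one cannot recover $C^{-1}$ simply by comparing $U_{C^{-1}}$ with $U_{\{A^{-1}B^{-1}A^{-1}\}}$ and invoking injectivity; that route would determine the inverse only up to sign. The explicit formula $C^{-1} = U_C^{-1} C$ is precisely what lets me extract the element rather than its square, so making sure this identity is in hand, either derived from the definition of the Jordan inverse or quoted from \cite{1}, is the crux of the argument. Everything else is bookkeeping with \eqref{2} and Lemma \ref{l1}.
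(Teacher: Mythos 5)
Your argument is correct, but note that the paper itself offers no proof of this statement: it is quoted verbatim from \cite[Lemma 1.24]{1}, so there is nothing in the text to compare against. What you have written is essentially the standard proof from the Jordan-algebra literature, and it hangs together: \eqref{2} plus Lemma \ref{l1} give that $U_{\{ABA\}}=U_AU_BU_A$ is boundedly invertible with inverse $U_{A^{-1}}U_{B^{-1}}U_{A^{-1}}$, the ``iff'' direction of Lemma \ref{l1} yields invertibility of $C=\{ABA\}$, and the recovery formula $C^{-1}=U_C^{-1}C$ telescopes to $U_{A^{-1}}B^{-1}=\{A^{-1}B^{-1}A^{-1}\}$. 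Your closing remark about the sign ambiguity in comparing $U_{C^{-1}}$ with $U_{\{A^{-1}B^{-1}A^{-1}\}}$ is a genuine and worthwhile observation. One small refinement: your derivation of the recovery formula expands $U_{X^{-1}}X$ and invokes $(X^{-1})^2\circ X=X^{-1}$, which is the symmetry of the Jordan inverse and is not literally part of the paper's definition (the paper only postulates $X\circ X^{-1}=I$ and $X^{2}\circ X^{-1}=X$). It is cleaner, and avoids that extra fact, to compute directly
\begin{equation*}
U_XX^{-1}=2(X\circ X^{-1})\circ X - X^{2}\circ X^{-1}=2X-X=X,
\end{equation*}
and then apply $U_X^{-1}=U_{X^{-1}}$ from Lemma \ref{l1} to obtain $X^{-1}=U_{X^{-1}}X$. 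With that adjustment the proof uses only the definitions and lemmas actually stated in the paper.
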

For more details, we refer the reader to \cite{1, han}.

A real-valued function f on $\mathbb{R}$ is said to be operator monotone (increasing) on a
JB-algebra $\mathcal{A}$ if $A\leq B$ implies $f(A) \leq f(B)$. We say $f$ is operator convex if for
any $\lambda\in[0, 1]$,
$$f((1-\lambda)A +\lambda B) \leq (1-\lambda)f(A) +\lambda f(B).$$

Wang et al. \cite{wang} introduced some operator means for two positive invertible elements $A$, $B$ in a unital $JB$-algebra $\mathcal{A}$
and $\nu\in[0,1]$, such as
\begin{itemize}
\item $\nu$-weighted harmonic mean: $A!_{\nu}B=\left((1-\nu) A^{-1}+\nu B^{-1}\right)^{-1}$;
\item  $\nu$-weighted geometric mean: $A\sharp_{\nu} B=\{A^{1/2}\{A^{-1/2}BA^{-1/2}\}^{\nu}A^{1/2}\}$;
\item $\nu$-weighted arithmetic mean: $A\nabla_{\nu} B=(1-\nu) A+\nu B$.
 \end{itemize}
The following relations among them are also proved in \cite{wang}.
\begin{align}
&A\sharp_{\nu}B=B\sharp_{1-\nu}A, \label{01}\\
&(A\sharp_\nu B)^{-1}=A^{-1}\sharp_\nu B^{-1}, \label{02}\\
&A!_{\nu}B\leq A\sharp_{\nu}B\leq A\nabla_{\nu} B, \label{03}\\
&(\alpha A\sharp_{\nu} \beta B)=(\alpha\sharp_{\nu}\beta)( A\sharp_{\nu}B) &&(\alpha>0, \beta>0)\label{3}\\
&\{C(A\sharp_\nu B)C\}=\{CAC\}\sharp_\nu\{CBC\}&&\text{ for any invertible } C \in \mathcal{A}.\label{04}
\end{align}

They also stated the following definition and proved the following lemmas.

\begin{definition}\cite[Definition 8]{wang}\label{d1}
Let $f_{\alpha}(x)$ be a real-valued function on $(\alpha,x)\in (0,\infty)\times[0,\infty)$ that is separately continuous with respect to $\alpha$ and $x$. We say $f_{\alpha}(x)$ is uniformly Riemann integrable on $\alpha\in(0,\infty)$ for $x$ on bounded  and closed intervals if for any closed interval $[0,M]\subset[0,\infty)$, and $\varepsilon>0$, there exist $1>\delta>0$ and $N>1$, such that for any positive numbers $\delta_1,\delta_2\leq\delta$, $N_1,N_2\geq N$, partitions $\Delta_{\beta}$,  $\Delta_{\gamma}$ of $[\delta,N]$ with $\max_{k,l}\{\mid\Delta\beta_k\mid,\mid\Delta\gamma_l\mid\}<\delta$, and for all $x\in[0,M]$, we have
\begin{align*}
&\left|\int_{\delta_1}^{\delta_2}f_{\alpha}(x)d\alpha\right|<\dfrac{\varepsilon}{3},~~\left|\int_{N_1}^{N_2}f_{\alpha}(x)d\alpha\right|<\dfrac{\varepsilon}{3},\\
&\left|\sum_k f_{\beta^*_k}(x)\Delta\beta_k-\sum_l f_{\gamma^*_l}(x)\Delta\gamma_l\right|<\dfrac{\varepsilon}{3},
\end{align*}
where $\beta^*_k\in[\beta_{k-1},\beta_k]$ and $\gamma_l^*\in[\gamma_{l-1},\gamma_l]$ are arbitrary.
\end{definition}
\begin{lemma}\cite[Lemma 1 \and 2]{wang}\label{l3}
Let $f_{\alpha}(x)$ be a family of (operator monotone) operator concave functions on a unital $JB$-algebra $\mathcal{A}$ indexed by $\alpha$ in $(0,\infty)$. Assume $f_{\alpha}(x)$ is uniformly Riemann integrable on $\alpha\in(0,\infty)$ for $x$ on bounded and closed intervals. Then $\int_0^{\infty}f_{\alpha}(x)d\alpha$ is also (operator monotone) operator concave.
 \end{lemma}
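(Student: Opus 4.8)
The plan is to realize $g(x) := \int_0^\infty f_\alpha(x)\,d\alpha$ as a norm limit of Riemann sums inside $\mathcal{A}$ and to use that both operator monotonicity and operator concavity are inherited by finite positive linear combinations and are stable under norm limits. The starting observation is purely algebraic: if $g_1,\dots,g_m$ are operator concave (resp.\ operator monotone) on $\mathcal{A}$ and $c_1,\dots,c_m\geq 0$, then $\sum_j c_j g_j$ is again operator concave (resp.\ operator monotone), because for $A\leq B$ one has $g_j(A)\leq g_j(B)$ for each $j$ and multiplying by $c_j\geq0$ and summing respects the order (and identically for the concavity inequality $g_j((1-\lambda)A+\lambda B)\geq(1-\lambda)g_j(A)+\lambda g_j(B)$). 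In particular every Riemann sum $S_\Delta(x)=\sum_k f_{\beta_k^*}(x)\,\Delta\beta_k$ with $\Delta\beta_k>0$ enjoys the same property as the individual $f_\alpha$.

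Next I would bring in the functional calculus. For a positive $C\in\mathcal{A}$ the associative $JB$-subalgebra generated by $C$ and the unit is isometrically isomorphic to $C(Sp(C))$, so $\|h(C)\|=\sup_{t\in Sp(C)}|h(t)|$ for every continuous $h$. Fixing $M>0$ and restricting to positive $C$ with $\|C\|\leq M$, so that $Sp(C)\subseteq[0,M]$, the hypothesis of Definition~\ref{d1} says exactly that the improper Riemann sums of $f_\alpha(x)$ over $(0,\infty)$ converge uniformly for $x\in[0,M]$: the two tail estimates control the pieces near $0$ and near $\infty$ uniformly in $x$, while the Cauchy estimate on partition sums controls the bulk on $[\delta,N]$. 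Hence the limit $g$ is a continuous uniform limit of the $S_\Delta$ on $[0,M]$, and through the isometry this becomes
\[
\|S_\Delta(C)-g(C)\|=\sup_{t\in Sp(C)}|S_\Delta(t)-g(t)|\longrightarrow 0,
\]
uniformly over all positive $C$ with $\|C\|\leq M$.

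Finally I would transfer the inequalities to the limit. For the monotone case take $A\leq B$, set $M=\max\{\|A\|,\|B\|\}$, note $S_\Delta(A)\leq S_\Delta(B)$ from the first step, and let the mesh shrink and the tails contract to obtain $S_\Delta(A)\to g(A)$ and $S_\Delta(B)\to g(B)$ in norm; since the positive cone of $\mathcal{A}$ is norm-closed, $g(A)\leq g(B)$. For the concave case fix $\lambda\in[0,1]$, put $C=(1-\lambda)A+\lambda B$ (positive, with $\|C\|\leq M$), use $S_\Delta(C)\geq(1-\lambda)S_\Delta(A)+\lambda S_\Delta(B)$, and pass to the norm limit in the same way.

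The crux is the middle step: one must verify that the improper integral $\int_0^\infty f_\alpha(C)\,d\alpha$ genuinely exists as a norm limit of Riemann sums and that this convergence is uniform over the whole set of positive elements of norm at most $M$. This is precisely what the three estimates of Definition~\ref{d1} are engineered to guarantee, so the real work is to assemble them into uniform convergence of the scalar Riemann sums on $[0,M]$; once that is in hand, the isometry of the functional calculus carries the entire argument into $\mathcal{A}$ with no further effort.
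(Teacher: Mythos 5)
Your argument is correct: positive Riemann sums inherit operator monotonicity/concavity, the three estimates of Definition~\ref{d1} make the scalar Riemann sums uniformly Cauchy on $[0,M]$, the isometric functional calculus (the subalgebra generated by a positive $C$ and the unit being isometric to $C(Sp(C))$) transports this to norm convergence in $\mathcal{A}$, and the norm-closedness of the positive cone preserves the inequalities in the limit. Note, however, that the paper offers no proof of this lemma to compare against --- it is imported verbatim from the cited reference of Wang and Wang --- so your write-up simply supplies the standard argument that the reference's Definition~8 is designed to support, and it does so without any genuine gap.
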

With an argument similar to the one used in the proof of the above lemma,
it can be shown that the above lemma is also valid for operator convex functions.

Bedrani et al., in \cite{bed1} introduced the geometric mean $A\sharp_rB$ for two accretive
matrices $A, B$, when $r \in (1, 2)$ and $r \in (-1, 0)$. They showed that many
results will be reversed when the domain of $r$ changes from $[0, 1]$ to $(-1, 0$) or $(1, 2)$.

The main goal of this paper is to extend the weighted geometric mean $A\sharp_{r} B$ for $r\in(-1,0)\cup (1,2)$ in the setting of $JB$-algebras.
We introduce the notion of $A\sharp_{r} B$, as follows
\begin{align}\label{4}
A\sharp_rB:=\{A^{\frac{1}{2}}\{A^{-\frac{1}{2}}BA^{-\frac{1}{2}}\}^rA^{\frac{1}{2}}\}
\end{align}
when $r\in(-1,0)\cup (1,2)$ and investigate some it's properties.
We also introduce the Heinz means, Heron means and give some famous inequalities involving them for $JB$-algebras.

\section{An extension of the weighted geometric mean}
First we state the following efficient Lemma.
\begin{lemma}\label{3.3}
Let $\mathcal{A}$ be a $JB$-algebra and $A\in \mathcal{A}$ and $r,s\in \mathbb{R}$. Then
\item[(i)]
$A^r\circ A^s=A^{r+s}~~~(A\geq 0)$,
\item[(ii)]
$U_AU_A=U_{A^2}$.
\end{lemma}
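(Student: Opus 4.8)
The two parts rely on rather different ideas, so I would handle them separately.

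The plan for part (i) is to reduce the computation to a commutative, associative setting via the spectral theorem for JB-algebras. Since every Jordan algebra is power associative, the norm-closed subalgebra generated by $A$ together with the unit $I$ is associative and commutative, and for $A\geq 0$ it is isometrically Jordan-isomorphic to $C(Sp(A))$ with $Sp(A)\subset[0,\infty)$; under this isomorphism $A$ corresponds to the identity function $t$ and the Jordan product $\circ$ corresponds to ordinary pointwise multiplication. First I would record that, in this representation, the functional-calculus powers $A^r$ and $A^s$ correspond to $t\mapsto t^r$ and $t\mapsto t^s$. Then $A^r\circ A^s$ corresponds to $t^r t^s = t^{r+s}$, which is exactly the representative of $A^{r+s}$; transporting this identity back to $\mathcal{A}$ yields the claim. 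The only delicate point is invoking the functional calculus legitimately, so that $\circ$ genuinely becomes a commutative product on the one-generated subalgebra; once that is in place the law of exponents is immediate (and for negative $r$ or $s$ the same computation is read off $Sp(A)$ under the standing assumption that $A$ is invertible).

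For part (ii) I would avoid the brute-force expansion of $U_A U_A$, since matching the result against the definition of $U_{A^2}$ forces one to juggle several higher-order Jordan operator identities. The clean route is instead to specialize the fundamental formula \eqref{2}, that is $U_{\{ABA\}} = U_A U_B U_A$, to the case $B = I$. This requires two short auxiliary computations: that $U_I$ is the identity map, since $U_I B = 2(I\circ B)\circ I - I^2\circ B = 2B - B = B$, and that $\{AIA\} = U_A I = 2(A\circ I)\circ A - A^2\circ I = 2A^2 - A^2 = A^2$. Substituting $B=I$ into \eqref{2} then gives $U_{A^2} = U_{\{AIA\}} = U_A U_I U_A = U_A U_A$, which is precisely the asserted identity.

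In summary, the real substance of the lemma lies in part (i), where the main obstacle is setting up the JB functional calculus correctly so that $\circ$ becomes a genuine commutative product; part (ii) then reduces to a one-line consequence of \eqref{2} as soon as one spots the substitution $B=I$.
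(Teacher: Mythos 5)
Your proposal is correct, and part (i) is essentially the paper's own argument: the authors likewise invoke the continuous functional calculus at $A$ with $f(t)=t^r$, $g(t)=t^s$ on $(0,\infty)$ and read off $A^{r+s}=(fg)(A)=f(A)\circ g(A)$. Your version just makes explicit the underlying mechanism (the one-generated closed subalgebra being isomorphic to $C(Sp(A))$, so that $\circ$ becomes pointwise multiplication), and you correctly flag the need for invertibility of $A$ when $r$ or $s$ is negative, a hypothesis the paper leaves implicit by working on $(0,\infty)$.

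For part (ii) you take a genuinely different route. The paper verifies the operator identity pointwise, asserting $U_AU_A(B)=\{A\{ABA\}A\}=\{A^2BA^2\}=U_{A^2}(B)$ and citing MacDonald's theorem for the middle equality. You instead specialize the fundamental formula \eqref{2}, $U_{\{ABA\}}=U_AU_BU_A$, to $B=I$, after checking that $U_I$ is the identity map and that $\{AIA\}=A^2$; this yields $U_{A^2}=U_AU_IU_A=U_AU_A$ in one line. Both arguments are valid, and they are close cousins, since \eqref{2} is itself typically derived from MacDonald's (or Shirshov--Cohn) theorem; but your derivation is more economical in context, because it reuses an identity the paper has already recorded in its preliminaries rather than appealing to MacDonald's theorem afresh, and the two auxiliary computations $U_I=\mathrm{id}$ and $U_AI=A^2$ are immediate from the definition of $U_A$.
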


\begin{proof}
(i) Consider continuous real functions $f(t)=t^r,~g(t)=t^s$ on $(0,\infty)$. By the
continuous functional calculus at $A$, we have
\begin{equation*}
A^{r+s}=(fg)(A)=f(A)\circ g(A)=A^r\circ A^s.
\end{equation*}

(ii) For every $B\in \mathcal{A}$, The identity $$U_AU_A(B)=\{A\{ABA\}A\}=\{A^2BA^2\}=U_{A^2}(B)$$ follows from MacDonald's theorem.
\end{proof}

It is well known that the following integral representation is valid for $x>0$ and $0<r<1$. (See relation (V.4) in \cite{bah1}.)
\begin{align}\label{1.7}
x^r=\frac{\sin r\pi}{\pi}\int_0^\infty \frac{x}{\lambda+x}\lambda^{r-1} d\lambda.
\end{align}

For $x\in (0,\infty)$, from \eqref{1.7}, we obtain
\begin{align}
x^r&=\frac{\sin (r-1)\pi}{\pi}\int_0^\infty \frac{x^2}{\lambda+x}\lambda^{r-2} d\lambda && r\in(1,2)\label{1.8}\\
x^r&=\frac{\sin (r+1)\pi}{\pi}\int_0^\infty \frac{1}{\lambda+x}\lambda^{r} d\lambda && r\in(-1,0)\label{1.9}
\end{align}

By changing of variable $\lambda=\frac{1}{\alpha}$ from \eqref{1.7}, we have

\begin{align}\label{1.10}
x^r=\frac{\sin r\pi}{\pi}\int_0^\infty \frac{x}{1+\alpha x}\alpha^{-r} d\alpha.
\end{align}

Thus for $x\in (0,\infty)$, from \eqref{1.10}, we get
\begin{align}
x^r&=\frac{\sin (r-1)\pi}{\pi}\int_0^\infty \frac{x^2}{1+\alpha x}\alpha^{-r+1} d\alpha && r\in(1,2)\label{1.11}\\
x^r&=\frac{\sin (r+1)\pi}{\pi}\int_0^\infty \frac{1}{1+\alpha x}\alpha^{-r-1} d\alpha && r\in(-1,0)\label{1.12}
\end{align}

\begin{proposition}\label{p1}
Let $r\in[-1,0]\cup[1,2]$ and $x\geq0$, the real-valued function $x\rightarrow x^r$ is operator convex on unital JB-algebras.
\end{proposition}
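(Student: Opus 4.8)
The plan is to split the range $[-1,0]\cup[1,2]$ into the four endpoints $r\in\{-1,0,1,2\}$ and the two open intervals $(-1,0)$ and $(1,2)$. At the endpoints the claim is elementary: $x^{0}=I$ is constant and $x^{1}=x$ is affine, hence both are operator convex; $x^{2}$ is operator convex because $\tfrac12(A^{2}+B^{2})-\bigl(\tfrac12(A+B)\bigr)^{2}=\tfrac14(A-B)^{2}\geq0$; and $x^{-1}$ is operator convex on positive invertible elements, which I take from the functional calculus. It therefore remains to treat $r\in(1,2)$ and $r\in(-1,0)$, and for these I would invoke the integral representations \eqref{1.11} and \eqref{1.12}, which already express $x^{r}$ as a strictly positive scalar multiple of an integral over $\alpha\in(0,\infty)$ of an explicit family $f_{\alpha}(x)$. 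The strategy is then to verify that each member of the family is operator convex and that the family satisfies the uniform Riemann integrability hypothesis of Definition \ref{d1}, so that the operator convex analogue of Lemma \ref{l3} applies.

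The first step is the pointwise operator convexity of the integrands. For $r\in(-1,0)$ the integrand in \eqref{1.12} is $f_{\alpha}(x)=(1+\alpha x)^{-1}\alpha^{-r-1}$: since $t\mapsto t^{-1}$ is operator convex and $x\mapsto 1+\alpha x$ is affine with $\alpha>0$, the composition $x\mapsto(1+\alpha x)^{-1}$ is operator convex, and the weight $\alpha^{-r-1}$ is nonnegative. For $r\in(1,2)$ the integrand in \eqref{1.11} is $\tfrac{x^{2}}{1+\alpha x}\alpha^{-r+1}$, and here I would use the algebraic identity
\begin{equation*}
\frac{x^{2}}{1+\alpha x}=\frac{1}{\alpha^{2}}\Bigl(\alpha x-1+\frac{1}{1+\alpha x}\Bigr),
\end{equation*}
which exhibits it as a nonnegative multiple of the sum of an affine term and the operator convex function $(1+\alpha x)^{-1}$; again the weight $\alpha^{-r+1}$ is nonnegative. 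The scalar prefactors $\tfrac{\sin(r-1)\pi}{\pi}$ and $\tfrac{\sin(r+1)\pi}{\pi}$ are strictly positive on $(1,2)$ and $(-1,0)$ respectively, so in each case $x^{r}$ is represented as a positive superposition of operator convex functions.

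The main obstacle is verifying the uniform Riemann integrability condition of Definition \ref{d1} for these two families, which is precisely what legitimises passing operator convexity through the integral via Lemma \ref{l3}. This requires controlling the two tails and the Riemann sums uniformly for $x$ in a bounded closed interval. Near $\alpha=\infty$ the integrands decay like $\alpha^{-r}$ (for $r\in(1,2)$) and $\alpha^{-r-2}$ (for $r\in(-1,0)$), whose exponents lie below $-1$; near $\alpha=0$ they behave like $\alpha^{1-r}$ and $\alpha^{-r-1}$, whose exponents exceed $-1$; this gives integrability at both ends. The delicate point is that these estimates, and the resulting equicontinuity of the Riemann sums, must be made uniform in $x$ over the interval, and in the cases involving a negative power one must keep $x$ bounded away from $0$. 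Once the hypotheses of Definition \ref{d1} are checked, the operator convex version of Lemma \ref{l3} yields that $\int_{0}^{\infty}f_{\alpha}(x)\,d\alpha$ is operator convex, and hence so is $x^{r}$ after multiplication by the positive prefactor. Together with the endpoint cases, this establishes the proposition.
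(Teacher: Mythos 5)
Your proposal follows essentially the same route as the paper: the integral representations \eqref{1.11}--\eqref{1.12} combined with the operator-convex analogue of Lemma \ref{l3}, and the tail and equicontinuity estimates you sketch (with exponents $-r$, $1-r$ and $-r-2$, $-r-1$) match the ones the paper carries out in detail. The additional steps you include --- the endpoint cases $r\in\{-1,0,1,2\}$ and the explicit verification that each integrand is operator convex, via the identity $\frac{x^{2}}{1+\alpha x}=\alpha^{-2}\bigl(\alpha x-1+(1+\alpha x)^{-1}\bigr)$ --- do not appear in the paper's proof but are actually required (Lemma \ref{l3} needs the family to consist of operator convex functions, and the statement is for the closed intervals), so they fill small gaps rather than constituting a different approach.
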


\begin{proof}
Let $\mathcal{A}$ be a unital $JB$-algebra, $r\in(-1,0)$ and $x\geq0$.
From \eqref{1.12}, we have
\begin{align*}
\left|\int_{\delta_1}^{\delta_2}\frac{1}{1+\alpha x}\alpha^{-r-1}d\alpha\right|\leq
\left|\int_{\delta_1}^{\delta_2}\alpha^{-r-1}d\alpha\right|\rightarrow 0\text{ uniformly },
\text{ as }\delta_1,\delta_2\rightarrow 0.
\end{align*}

From \eqref{1.9}, we have
\begin{align*}
\left|\int_{N_1}^{N_2}\frac{\alpha}{\alpha+x}\alpha^{r-1}d\alpha\right|\leq\left|\int_{N_1}^{N_2}\alpha^{r-1}d\alpha\right|\rightarrow 0\text{ uniformly },
\text{ as }N_1,N_2\rightarrow\infty
\end{align*}
If $[a,b]$ is an arbitrary interval of $(0,\infty)$ and $\alpha\in [a,b]$, then

\begin{align*}
\sup_{x\in[0,M]}&\left|\frac{1}{(1+\alpha x)\alpha^{r+1}}-\frac{1}{(1+\alpha_0 x)\alpha_0^{r+1}} \right|\\
&\leq \frac{1}{a^{2(r+1)}}|(1+\alpha x)\alpha^{r+1}-(1+\alpha_0 x)\alpha_0^{r+1}|\rightarrow 0, ~ \text {as }\alpha\rightarrow\alpha_0.
\end{align*}

Hence $f_\alpha(x)=\frac{1}{1+\alpha x}\alpha^{-r-1}$ is uniformly Reimann integrable on $\alpha\in(0,\infty)$.
In this case, operator convexity the function $x\rightarrow x^r$ on $\mathcal{A}$, follows from
 Lemma \eqref{l3} that is also valid for operator convex functions.

Now if $r\in(1,2)$ and $x\geq0$.
From \eqref{1.11}, we have
\begin{align*}
\left|\int_{\delta_1}^{\delta_2}\frac{x^2}{1+\alpha x}\alpha^{-r+1}d\alpha\right|\leq
M^2\left|\int_{\delta_1}^{\delta_2}\alpha^{-r+1}d\alpha\right|\rightarrow 0\text{ uniformly },
\text{ as }\delta_1,\delta_2\rightarrow 0.
\end{align*}

Also
\begin{align*}
\left|\int_{N_1}^{N_2}\frac{\alpha x}{1+\alpha x}x\alpha^{-r}d\alpha\right|\leq M\left|\int_{N_1}^{N_2}\alpha^{-r}d\alpha\right|\rightarrow 0\text{ uniformly },
\text{ as }N_1,N_2\rightarrow\infty
\end{align*}
If $[a,b]$ is an arbitrary interval of $(0,\infty)$ and $\alpha\in [a,b]$, then

\begin{align*}
\sup_{x\in[0,M]}&\left|\frac{x^2}{(1+\alpha x)\alpha^{1-r}}-\frac{x^2}{(1+\alpha_0 x)\alpha_0^{1-r}} \right|\\
&\leq \frac{M^2}{a^{2(1-r)}}|(1+\alpha x)\alpha^{1-r}-(1+\alpha_0 x)\alpha_0^{1-r}|\rightarrow 0,~ \text {as }\alpha\rightarrow\alpha_0.
\end{align*}

Hence $g_\alpha(x)=\frac{x^2}{1+\alpha x}\alpha^{-r+1}$ is uniformly Reimann integrable on $\alpha\in(0,\infty)$.
In this case, operator convexity the function $x\rightarrow x^r$ on $\mathcal{A}$, follows from
 Lemma \eqref{l3} that is also valid for operator convex functions.

\end{proof}

In the next Theorems, for $r\in(-1,0)$ or $r\in(1,2)$, we give an integral representation of $A\sharp_{r} B$ in unital $JB$-algebras.
\begin{theorem}\label{t1}
Let $A$, $B$ be positive invertible elements in a unital JB-algebra $\mathcal{A}$. Then for any $r\in(1,2)$
\begin{align*}
&A\sharp_rB=\int_0^1\left((1-s)B^{-1}+s\{B^{-1}AB^{-1}\}\right)^{-1}d\mu(s),\\
&\text {where}~~d\mu(s)=\frac{\sin(r-1)\pi}{\pi}\frac{s^{r-2}}{(1-s)^{r-1}}ds.
\end{align*}
\end{theorem}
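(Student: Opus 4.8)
The plan is to reduce the identity to the scalar integral representation \eqref{1.8} via the continuous functional calculus, and then to transport the resulting operator-valued integral through the linear maps $U_{A^{\pm 1/2}}$ with the help of the inversion identities recorded in Lemmas \ref{l1} and \ref{l2}. First I would put $X:=\{A^{-1/2}BA^{-1/2}\}=U_{A^{-1/2}}(B)$, which is positive and invertible because $B$ is, so that by definition \eqref{4} one has $A\sharp_rB=U_{A^{1/2}}(X^r)$. Applying \eqref{1.8} through the functional calculus at $X$---the scalar identity holds pointwise on $\mathrm{Sp}(X)$ and the functional calculus is an isometric isomorphism onto $C(\mathrm{Sp}(X))$---gives
\[X^r=\frac{\sin(r-1)\pi}{\pi}\int_0^\infty \frac{X^2}{\lambda+X}\,\lambda^{r-2}\,d\lambda,\]
where the integrand is read as $g_\lambda(X)$ for $g_\lambda(x)=x^2/(\lambda+x)$.

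Next I would move the bounded linear map $U_{A^{1/2}}$ inside the integral and evaluate the integrand. The algebraic heart is the rewriting $\frac{X^2}{\lambda+X}=(\lambda X^{-2}+X^{-1})^{-1}$, valid inside the associative commutative subalgebra generated by $X$. Pushing $U_{A^{1/2}}$ through the inverse via Lemma \ref{l2} in the form $U_{A^{1/2}}(Y^{-1})=[U_{A^{-1/2}}(Y)]^{-1}$ and using linearity of $U_{A^{-1/2}}$ reduces everything to computing $U_{A^{-1/2}}(X^{-1})$ and $U_{A^{-1/2}}(X^{-2})$. For the first, Lemma \ref{l2} gives $X^{-1}=U_{A^{1/2}}(B^{-1})$, and then Lemma \ref{l1}, i.e. $U_{A^{-1/2}}=U_{A^{1/2}}^{-1}$, yields $U_{A^{-1/2}}(X^{-1})=B^{-1}$. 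For the second, writing $X^{-2}=U_{X^{-1}}(I)$ and expanding $U_{X^{-1}}=U_{A^{1/2}}U_{B^{-1}}U_{A^{1/2}}$ by identity \eqref{2} gives $X^{-2}=U_{A^{1/2}}(\{B^{-1}AB^{-1}\})$, hence $U_{A^{-1/2}}(X^{-2})=\{B^{-1}AB^{-1}\}$. Combining these, $U_{A^{1/2}}\big(\tfrac{X^2}{\lambda+X}\big)=(\lambda\{B^{-1}AB^{-1}\}+B^{-1})^{-1}$, so that
\[A\sharp_rB=\frac{\sin(r-1)\pi}{\pi}\int_0^\infty\big(\lambda\{B^{-1}AB^{-1}\}+B^{-1}\big)^{-1}\lambda^{r-2}\,d\lambda.\]

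Finally, the substitution $\lambda=s/(1-s)$ carries $(0,\infty)$ onto $(0,1)$. Since $\lambda\{B^{-1}AB^{-1}\}+B^{-1}=\frac{1}{1-s}\big((1-s)B^{-1}+s\{B^{-1}AB^{-1}\}\big)$ and $\lambda^{r-2}\,d\lambda=\frac{s^{r-2}}{(1-s)^{r}}\,ds$, the factor $(1-s)$ produced by the inverse combines with $(1-s)^{-r}$ to give precisely $d\mu(s)$, which is the asserted formula. The step I expect to be the main obstacle is the interchange of $U_{A^{1/2}}$ with the improper operator integral: one must verify that the integral converges in norm---the tail estimates near $0$ and near $\infty$ established in the proof of Proposition \ref{p1} do this uniformly on $\mathrm{Sp}(X)$---and then invoke the continuity and linearity of the bounded map $U_{A^{1/2}}$ to justify moving it under the integral sign. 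Once that is in place, the remaining manipulations are routine applications of Lemmas \ref{l1}, \ref{l2}, \ref{3.3} and identity \eqref{2}.
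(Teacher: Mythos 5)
Your argument is correct and follows essentially the same route as the paper's: both reduce to the scalar integral representation \eqref{1.7}--\eqref{1.8} of $x^r$, apply the $JB$-algebra functional calculus at $\{A^{-1/2}BA^{-1/2}\}$, and transport the integrand through $U_{A^{\pm 1/2}}$ using \eqref{2} together with Lemmas \ref{l1} and \ref{l2}. The only differences are cosmetic: the paper performs the substitution $\lambda=s/(1-s)$ at the scalar level before invoking the functional calculus and reaches the integrand via $U_{A^{1/2}}U_{\{A^{-1/2}BA^{-1/2}\}}=U_BU_{A^{-1/2}}$, whereas you rewrite $x^2/(\lambda+x)$ as a single reciprocal and substitute at the end, but the computations are equivalent (and you are, if anything, more explicit about justifying the interchange of $U_{A^{1/2}}$ with the improper integral).
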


\begin{proof}
By a change of variables to \eqref{1.7}, we obtain
\begin{align}\label{2.a}
x^r=\frac{\sin r\pi}{\pi}\int_0^1 \frac{x}{s+(1-s)x}\frac{s^{r-1}}{(1-s)^{r}}ds,
\end{align}

Therefore for $r\in(1,2)$ and $x\in(0,\infty)$,
\begin{align}\label{8}
&x^r=\int_0^1x^2\left(s+(1-s)x\right)^{-1}d\mu(s),\\
&\text {where}~~d\mu(s)=\frac{\sin(r-1)\pi}{\pi}\frac{s^{r-2}}{(1-s)^{r-1}}ds.\notag
\end{align}
Applying functional calculus in $JB$-algebras to $(\ref{8})$, (\cite[Proposition 1.21]{1}).
We obtain
\begin{align*}
&\{A^{-\frac{1}{2}}BA^{-\frac{1}{2}}\}^r\\
&=\int_0^1\{A^{-\frac{1}{2}}BA^{-\frac{1}{2}}\}
\left(sI+(1-s)\{A^{-\frac{1}{2}}BA^{-\frac{1}{2}}\}\right)^{-1}\{A^{-\frac{1}{2}}BA^{-\frac{1}{2}}\}d\mu(s)\\
&=\int_0^1U_{\{A^{-\frac{1}{2}}BA^{-\frac{1}{2}}\}}
\left(sI+(1-s\right)U_{A^{-\frac{1}{2}}}B)^{-1}
d\mu(s).
\end{align*}
Therefore by \eqref{2}, \eqref{4}, Lemma $\ref{l1}$ and Lemma $\ref{l2}$, we get
\begin{align*}
&A\sharp_rB=\{A^{\frac{1}{2}}\{A^{-\frac{1}{2}}BA^{-\frac{1}{2}}\}^rA^{\frac{1}{2}}\}\\
&=\left\{A^{\frac{1}{2}}\int_0^1U_{\{A^{-\frac{1}{2}}BA^{-\frac{1}{2}}\}}
\left(sI+(1-s)U_{A^{-\frac{1}{2}}}(B)\right)^{-1}
d\mu(s)A^{\frac{1}{2}}\right\}
\\
&=\int_0^1U_{A^{\frac{1}{2}}}U_{A^{-\frac{1}{2}}}U_{B}
U_{A^{-\frac{1}{2}}}
\left(sI+(1-s)U_{A^{-\frac{1}{2}}}(B)\right)^{-1}d\mu(s)\\
&=\int_0^1U_{B}U_{A^{-\frac{1}{2}}}
\left(sI+(1-s)U_{A^{-\frac{1}{2}}}(B)\right)^{-1}d\mu(s)\\
&=\int_0^1\left((1-s)B^{-1}+s
\{B^{-1}AB^{-1}\}\right)^{-1}d\mu(s).
\end{align*}
\end{proof}
\begin{theorem}\label{t2}
Let $A$, $B$ be positive invertible elements in a unital JB-algebra $\mathcal{A}$. Then for any $r\in(-1,0)$

\begin{align*}
A\sharp_rB=\int_0^1\left((1-s)\{A^{-1}BA^{-1}\}+sA^{-1}\right)^{-1}d\nu(s),
\end{align*}
where
$d\nu(s)=\frac{\sin(r+1)\pi}{\pi}\frac{s^r}{(1-s)^{r+1}}ds$.

\begin{proof}
By \eqref{2.a} for $r\in(-1,0)$, we have

\begin{align}\label{2.0.0}
&x^r=\int_0^1\left(s+(1-s)x\right)^{-1}d\mu(s),\\
&\text {where}~~d\mu(s)=\frac{\sin(r+1)\pi}{\pi}\frac{s^{r}}{(1-s)^{r+1}}ds.\notag
\end{align}
Applying functional calculus in $JB$-algebras to $(\ref{2.0.0})$, (\cite[Proposition 1.21]{1}).
We obtain
\begin{align*}
A\sharp_rB&=\int_0^1\left((1-s)\{A^{-\frac{1}{2}}\{A^{-\frac{1}{2}}BA^{-\frac{1}{2}}\}A^{-\frac{1}{2}}\}+sA^{-1}\right)^{-1}d\mu(s)\\
&=\int_0^1\left((1-s)U_{A^{-1}}(B)+sA^{-1}\right)^{-1}d\nu(s)&& (\text{ by Lemma }\ref{3.3}).
\end{align*}
\end{proof}

\end{theorem}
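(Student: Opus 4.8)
The plan is to mirror the proof of Theorem~\ref{t1}, adapting the scalar kernel to the range $r\in(-1,0)$. The first step is to record the appropriate scalar integral representation on $(0,1)$. Since $r\in(-1,0)$ forces $r+1\in(0,1)$, the natural starting point is~\eqref{1.9}; applying to it the substitution $\lambda=\tfrac{s}{1-s}$ (so that $\tfrac{1}{\lambda+x}=\tfrac{1-s}{s+(1-s)x}$, $\lambda^{r}=\tfrac{s^{r}}{(1-s)^{r}}$ and $d\lambda=\tfrac{ds}{(1-s)^{2}}$) and collecting the powers of $(1-s)$ yields
\begin{align*}
x^{r}=\frac{\sin(r+1)\pi}{\pi}\int_{0}^{1}\frac{1}{s+(1-s)x}\,\frac{s^{r}}{(1-s)^{r+1}}\,ds=\int_{0}^{1}\bigl(s+(1-s)x\bigr)^{-1}\,d\nu(s)
\end{align*}
for every $x\in(0,\infty)$, with $d\nu(s)$ as in the statement. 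One checks that the exponents $r>-1$ and $r+1<1$ make this integral convergent at both endpoints, and in fact $\nu$ is a probability measure, which is consistent with $A\sharp_{r}B$ being a mean.

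The second step is to transport this identity into $\mathcal{A}$ by the continuous functional calculus \cite[Proposition~1.21]{1}. Writing $X:=\{A^{-1/2}BA^{-1/2}\}=U_{A^{-1/2}}(B)$, which is positive and invertible because $A^{-1/2}$ and $B$ are (Lemma~\ref{l2}), the formula above gives $X^{r}=\int_{0}^{1}(sI+(1-s)X)^{-1}\,d\nu(s)$. By the definition~\eqref{4}, $A\sharp_{r}B=\{A^{1/2}X^{r}A^{1/2}\}=U_{A^{1/2}}(X^{r})$, so applying the bounded linear map $U_{A^{1/2}}$ and interchanging it with the norm-convergent integral reduces everything to evaluating $U_{A^{1/2}}\bigl((sI+(1-s)X)^{-1}\bigr)$ for each fixed $s\in(0,1)$.

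The heart of the argument is this pointwise evaluation. For $s\in(0,1)$ the element $Y:=sI+(1-s)X$ satisfies $Y\geq sI$, hence is invertible, and Lemma~\ref{l2} applied with the pair $(A^{-1/2},Y)$ gives the inversion rule $U_{A^{1/2}}(Y^{-1})=\{A^{1/2}Y^{-1}A^{1/2}\}=\bigl(\{A^{-1/2}YA^{-1/2}\}\bigr)^{-1}=\bigl(U_{A^{-1/2}}(Y)\bigr)^{-1}$. Using linearity of $U_{A^{-1/2}}$ together with $U_{A^{-1/2}}(I)=A^{-1}$ and $U_{A^{-1/2}}U_{A^{-1/2}}=U_{A^{-1}}$ from Lemma~\ref{3.3}(ii), I then compute
\begin{align*}
U_{A^{-1/2}}(Y)=sU_{A^{-1/2}}(I)+(1-s)U_{A^{-1/2}}U_{A^{-1/2}}(B)=sA^{-1}+(1-s)\{A^{-1}BA^{-1}\}.
\end{align*}
Substituting this back under the integral sign produces exactly $A\sharp_{r}B=\int_{0}^{1}\bigl((1-s)\{A^{-1}BA^{-1}\}+sA^{-1}\bigr)^{-1}\,d\nu(s)$.

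The step I expect to be the main obstacle is the pointwise inversion identity $U_{A^{1/2}}(Y^{-1})=(U_{A^{-1/2}}(Y))^{-1}$: in a non-associative Jordan algebra one cannot simply cancel $A^{1/2}Y^{-1}A^{1/2}$ against $A^{-1/2}YA^{-1/2}$, and this must instead be deduced from the inversion formula of Lemma~\ref{l2} after verifying that both $A^{-1/2}$ and $Y$ are invertible (the latter via $Y\geq sI$). A secondary, purely technical point is the interchange of $U_{A^{1/2}}$ with the integral, which is legitimate because $U_{A^{1/2}}$ is norm-continuous and the integrand is continuous and $\nu$-integrable on $(0,1)$.
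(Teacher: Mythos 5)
Your proposal is correct and follows essentially the same route as the paper: the substitution $\lambda=\tfrac{s}{1-s}$ in \eqref{1.9} to get the scalar representation on $(0,1)$, functional calculus at $\{A^{-1/2}BA^{-1/2}\}$, and then conjugation by $U_{A^{1/2}}$ using the inversion rule of Lemma \ref{l2} and $U_{A^{-1/2}}U_{A^{-1/2}}=U_{A^{-1}}$ from Lemma \ref{3.3}. In fact you supply more detail than the paper, whose proof compresses the conjugation--inversion step into a single line.
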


\begin{proposition}\label{p2}
Let $A$, $B$ be positive invertible elements in a unital JB-algebra $\mathcal{A}$ and $\alpha$ and $\beta$ two nonnegative real numbers. Then\\
i) $A\sharp_rB=\{B(A\sharp_{2-r}B)^{-1}B\}$ for any $r\in(1,2)$\\
ii) $A\sharp_rB=\{A(A\sharp_{-r}B)^{-1}A\}$ for any $r\in(-1,0)$\\
iii) $(A\sharp_r B)=(B\sharp_{1-r}A)$ for any $r\in(-1,2)$\\
iv) $(A\sharp_r B)^{-1}=(A^{-1}\sharp_{r}B^{-1})$ for any $r\in(-1,2)$\\
v) $(\alpha A\sharp_r \beta B)= (\alpha\sharp_r\beta) ( A\sharp_rB)$ for any $r\in(-1,2)$\\
vi) $A\sharp_rB$ is separately operator convex with respect to $A$, $B$, for any $r\in(-1,2)$\\
vii) $\{C(A\sharp_r B)C\}=\{CAC\}\sharp_r\{CBC\}$ for any invertible $C$ in $\mathcal{A}$.
\end{proposition}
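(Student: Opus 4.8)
The plan is to dispatch the algebraic identities (i)--(v) and (vii) purely through the calculus of the quadratic maps $U_X$, reserving the integral representations of Theorems \ref{t1} and \ref{t2} for the convexity statement (vi), which is the only genuinely analytic part. Throughout I write $C:=\{A^{-1/2}BA^{-1/2}\}=U_{A^{-1/2}}(B)$, so that $A\sharp_sB=U_{A^{1/2}}(C^{s})$ for every admissible $s$ and, by Lemma \ref{l2}, $(A\sharp_sB)^{-1}=U_{A^{-1/2}}(C^{-s})$. For (ii) (where $r\in(-1,0)$, $-r\in(0,1)$) I would simply compute $\{A(A\sharp_{-r}B)^{-1}A\}=U_A\,U_{A^{-1/2}}(C^{r})=U_{A^{1/2}}U_{A^{1/2}}U_{A^{-1/2}}(C^r)=U_{A^{1/2}}(C^r)=A\sharp_rB$, using $U_A=U_{A^{1/2}}U_{A^{1/2}}$ from Lemma \ref{3.3}(ii) and $U_{A^{1/2}}U_{A^{-1/2}}=\mathrm{id}$ from Lemma \ref{l1}. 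For (i) ($r\in(1,2)$) the same bookkeeping works once one records, via \eqref{2}, that $U_C=U_{A^{-1/2}}U_BU_{A^{-1/2}}$, hence $U_B=U_{A^{1/2}}U_CU_{A^{1/2}}$; then $\{B(A\sharp_{2-r}B)^{-1}B\}=U_B\,U_{A^{-1/2}}(C^{r-2})=U_{A^{1/2}}U_C(C^{r-2})=U_{A^{1/2}}(C^r)=A\sharp_rB$, where the penultimate step is $U_C(C^{r-2})=C^r$, read off from functional calculus in the associative subalgebra generated by $C$.

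Next I would obtain (iv) and (v) directly from \eqref{4}, and (iii), (vii) from (i)--(ii). For (iv), $(A\sharp_rB)^{-1}=U_{A^{-1/2}}(C^{-r})$ together with $C^{-1}=U_{A^{1/2}}(B^{-1})$ (Lemma \ref{l2}) gives $(A\sharp_rB)^{-1}=\{A^{-1/2}\{A^{1/2}B^{-1}A^{1/2}\}^rA^{-1/2}\}$, which is exactly $A^{-1}\sharp_rB^{-1}$. Statement (v) is the homogeneity $U_{(\alpha A)^{1/2}}=\alpha U_{A^{1/2}}$ combined with $(cX)^r=c^rX^r$. For (iii), I would combine (i) (resp. (ii)) with the known symmetry \eqref{01} on $[0,1]$: for $r\in(1,2)$, \eqref{01} turns $A\sharp_{2-r}B$ into $B\sharp_{r-1}A$, while (ii) applied to $B\sharp_{1-r}A$ (with $1-r\in(-1,0)$) produces the identical expression $\{B(B\sharp_{r-1}A)^{-1}B\}$; the case $r\in(-1,0)$ is symmetric, and $r\in[0,1]$ is \eqref{01}. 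For (vii) I would again peel off one factor by (i)/(ii), apply the $[0,1]$-congruence \eqref{04} to the resulting exponent in $(0,1)$, and collapse the quadratic maps using \eqref{2} and $U_CU_{C^{-1}}=\mathrm{id}$; e.g. for $r\in(1,2)$, $\{CAC\}\sharp_r\{CBC\}=U_CU_B\big((A\sharp_{2-r}B)^{-1}\big)=U_C(A\sharp_rB)=\{C(A\sharp_rB)C\}$.

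For (vi) I would first recast the two representations. Since $(1-s)B^{-1}+s\{B^{-1}AB^{-1}\}=U_{B^{-1}}((1-s)B+sA)$, Lemma \ref{l2} rewrites Theorem \ref{t1} as
\[
A\sharp_rB=\int_0^1\{B((1-s)B+sA)^{-1}B\}\,d\mu(s)\qquad(r\in(1,2)),
\]
and symmetrically $A\sharp_rB=\int_0^1\{A((1-s)B+sA)^{-1}A\}\,d\nu(s)$ for $r\in(-1,0)$, with $\mu,\nu\ge0$. The easy half is then immediate: for $r\in(1,2)$ with $B$ fixed, the inner argument is affine in $A$ and $x\mapsto x^{-1}$ is operator convex (Proposition \ref{p1} with $r=-1$), while $U_B$ is a positive linear map, so each integrand is operator convex in $A$, and positivity of $\mu$ gives operator convexity of $A\mapsto A\sharp_rB$; the mirror argument handles $B\mapsto A\sharp_rB$ for $r\in(-1,0)$.

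The hard part, which I expect to be the main obstacle, is convexity in the variable that sits inside the outer quadratic map, i.e. $B\mapsto\{B((1-s)B+sA)^{-1}B\}$ (and its mirror): this is no longer of the form $\Phi\circ(\text{affine})$ for a fixed positive map $\Phi$, and the standard matrix proof through $2\times2$ blocks and Schur complements has no analogue in a general $JB$-algebra. The clean missing ingredient is the joint operator convexity of the operator perspective $(X,Y)\mapsto\{YX^{-1}Y\}$; granting it, both separate convexities follow at once because $(A,B)\mapsto((1-s)B+sA,\,B)$ is jointly affine. I would attempt to establish this either through the variational identity $\{YX^{-1}Y\}=\sup_Z\big(2(Y\circ Z)-\{ZXZ\}\big)$, realizing the perspective as a supremum of jointly affine maps (so that its verification reduces to a Jordan ``completion of squares''), or by an integral-representation argument in the spirit of Lemma \ref{l3}. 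I would finally remark that, since the mean is operator concave on $[0,1]$, statement (vi) is to be read on $(-1,0)\cup(1,2)$.
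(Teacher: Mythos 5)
Your treatment of (i)--(v) is correct and essentially the paper's own $U$-calculus, and your route to (vii) --- peeling off one factor by (i)/(ii), applying the $[0,1]$ congruence \eqref{04} to the exponent $2-r$ (resp. $-r$), and collapsing $U_{\{CBC\}}U_{\{CBC\}^{-1}}=\mathrm{id}$ --- is a legitimate and arguably cleaner alternative to the paper, which instead reproves (vii) from the integral representations of Theorems \ref{t1} and \ref{t2}.

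Part (vi), however, contains a genuine gap, and it stems from choosing the wrong tool. You route everything through the integral representations, prove convexity only in the variable that sits \emph{inside} the resolvent (convexity in $A$ for $r\in(1,2)$, in $B$ for $r\in(-1,0)$), and then explicitly leave the other half --- convexity of $B\mapsto\{B((1-s)B+sA)^{-1}B\}$ --- unproven, offering only a conjectural joint convexity of the Jordan perspective $(X,Y)\mapsto\{YX^{-1}Y\}$ via a variational formula whose ``completion of squares'' you do not verify in the non-associative setting. None of this is needed. The direct argument is: $B\mapsto A\sharp_rB=U_{A^{1/2}}\bigl((U_{A^{-1/2}}B)^r\bigr)$ is the composition (positive linear map) $\circ$ (operator convex function, by Proposition \ref{p1} for $r\in[-1,0]\cup[1,2]$) $\circ$ (linear map), hence operator convex in $B$ with no integral representation at all; this is exactly the paper's computation. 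Convexity in $A$ then follows from the symmetry (iii): $A\sharp_rB=B\sharp_{1-r}A$, and $1-r$ again lies in $(-1,0)\cup(1,2)$, so the second-argument convexity just established applies. Your closing remark is nevertheless correct and worth keeping: the statement's range ``$r\in(-1,2)$'' in (vi) cannot be right as written, since the mean is operator \emph{concave} for $r\in(0,1)$; the proof (yours and the paper's) only covers $r\in(-1,0)\cup(1,2)$.
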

\begin{proof}
i)If $r\in(1,2)$, then $2-r\in(0,1)$ and
\begin{align*}
&\{B(A\sharp_{2-r} B)^{-1}B\}= U_B(A\sharp_{2-r} B)^{-1}\\
&=U_B(A^{-1}\sharp_{2-r} B^{-1})&&(\text{ by \eqref{02}} )\\
&=U_BU_{A^{-\frac{1}{2}}}\{A^{\frac{1}{2}}B^{-1}A^{\frac{1}{2}}\}^{2-r}&&(\text{ by \eqref{4}} )\\
&=U_BU_{A^{-\frac{1}{2}}}\{A^{-\frac{1}{2}}BA^{-\frac{1}{2}}\}^{r-2}&&(\text{ by Lemma \ref{l2}} )\\
&=U_BU_{A^{-\frac{1}{2}}}\{A^{-\frac{1}{2}}BA^{-\frac{1}{2}}\}^{r}\{A^{-\frac{1}{2}}BA^{-\frac{1}{2}}\}^{-2}&&(\text{ by Lemma \ref{3.3}} )\\
&=U_BU_{A^{-\frac{1}{2}}}U_{\{A^{\frac{1}{2}}B^{-1}A^{\frac{1}{2}}\}}
\{A^{-\frac{1}{2}}BA^{-\frac{1}{2}}\}^{r}&&( \{a^{-1}a^ra^{-1}\}=a^{r-2} )\\
&=U_BU_{A^{-\frac{1}{2}}}U_{A^{\frac{1}{2}}}U_{B^{-1}}U_{A^{\frac{1}{2}}}\{A^{-\frac{1}{2}}BA^{-\frac{1}{2}}\}^{r}&&(U_{\{aba\}}=U_aU_bU_a )\\
&=U_{A^{\frac{1}{2}}}\{A^{-\frac{1}{2}}BA^{-\frac{1}{2}}\}^{r}&&(\text{ by Lemma \ref{l1}} )\\
&=A\sharp_rB.&&(\text{ by \eqref{4}} )
\end{align*}
ii)
If $r\in(-1,0)$, then $-r\in(0,1)$ and
\begin{align*}
&\{A(A\sharp_{-r} B)^{-1}A\}= U_A(A\sharp_{-r} B)^{-1}\\
&=U_A(A^{-1}\sharp_{-r} B^{-1})&&(\text{ by \eqref{02}} )\\
&=U_A\{A^{-\frac{1}{2}}\{A^{\frac{1}{2}}B^{-1}A^{\frac{1}{2}}\}^{-r}A^{-\frac{1}{2}}\}&&(\text{ by \eqref{4}} )\\
&=U_AU_{A^{-\frac{1}{2}}}\{A^{\frac{1}{2}}B^{-1}A^{\frac{1}{2}}\}^{-r}\\
&=U_{A^{\frac{1}{2}}}\{A^{\frac{1}{2}}B^{-1}A^{\frac{1}{2}}\}^{-r}&&(\text{ by Lemma \ref{3.3} } )\\
&=U_{A^{\frac{1}{2}}}\{A^{-\frac{1}{2}}BA^{-\frac{1}{2}}\}^{r}&&(\text{ by Lemma \ref{l1}} )\\
&=A\sharp_rB.&&(\text{ by \eqref{4}} )
\end{align*}
iii)
If $0\leq r\leq1$, then \eqref{01} shows that $A\sharp_rB=B\sharp_{1-r}A$.
Now, suppose that $r\in(1,2)$, then
\begin{align*}
A\sharp_rB&=\{B(A\sharp_{2-r} B)^{-1}B\}&&(\text{by part(i)})\\
&=\{B(B\sharp_{r-1} A)^{-1}B\}&&(\text{ by \eqref{01}} )\\
&=B\sharp_{1-r}A.&&(\text{by part(ii)})
\end{align*}
If $r\in(-1,0)$, then
\begin{align*}
A\sharp_rB&=\{A(A\sharp_{-r} B)^{-1}A\}&&(\text{by part(ii)}) \\
&=\{A(B\sharp_{1+r} A)^{-1}A\}&&(\text{ by \eqref{01}} )\\
&=B\sharp_{1-r}A.&&(\text{by part(i)})
\end{align*}
iv)\begin{align*}
(A\sharp_rB)^{-1}&=\{A^{\frac{1}{2}}\{A^{-\frac{1}{2}}BA^{-\frac{1}{2}}\}^rA^{\frac{1}{2}}\}^{-1}\\
&=\{A^{-\frac{1}{2}}\{A^{\frac{1}{2}}B^{-1}A^{\frac{1}{2}}\}^rA^{-\frac{1}{2}}\}\\
&=A^{-1}\sharp_rB^{-1}.
\end{align*}

v) it follows directly from the definition.\\
vi)For any $0\leq \lambda\leq 1$, Operator convexity the function $x\rightarrow x^r$ on $\mathcal{A}$ for $r\in[-1,0]\cup[1,2]$ implies that
\begin{align*}
A\sharp_r[(1-\lambda)B_1+\lambda B_2]
&=U_{A^{\frac{1}{2}}}\left((1-\lambda)\{A^{-\frac{1}{2}}B_1A^{-\frac{1}{2}}\}+\lambda\{A^{-\frac{1}{2}}B_2A^{-\frac{1}{2}}\}\right)^r\\
&\leq U_{A^{\frac{1}{2}}}\left((1-\lambda)\{A^{-\frac{1}{2}}B_1A^{-\frac{1}{2}}\}^r+\lambda\{A^{-\frac{1}{2}}B_2A^{-\frac{1}{2}}\}^r\right)\\
&=(1-\lambda)U_{A^{\frac{1}{2}}}\{A^{-\frac{1}{2}}B_1A^{-\frac{1}{2}}\}^r+\lambda U_{A^{\frac{1}{2}}}\{A^{-\frac{1}{2}}B_2A^{-\frac{1}{2}}\}^r\\
&=(1-\lambda)(A\sharp_r B_1)+\lambda(A\sharp_r B_2).
\end{align*}
Similarly $B\sharp_{1-r}A$ is operator convex with respect to $A$.
By Proposition $\ref{p2}$ we have $A\sharp_r B=B\sharp_{1-r}A$, hence $A\sharp_r B$ is also operator convex with respect to $A$.\\
vii) If $0\leq r\leq1$, then \eqref{04} shows that $\{CAC\}\sharp_r\{CBC\}=\{C(A\sharp_rB)C\}$.

Now, suppose that $r\in (1,2)$. By Theorem \ref{t1}, we have
\begin{align*}
&\{CAC\}\sharp_r\{CBC\}\\
&=\int_0^1\left((1-s)\{CBC\}^{-1}+s\left\{\{CBC\}^{-1}\{CAC\}\{CBC\}^{-1}\right\}\right)^{-1}d\mu(s)\\
&=\int_0^1\left((1-s)\{CBC\}^{-1}+sU_{\{CBC\}^{-1}}U_{C}(A)\right)
^{-1}d\mu(s)\\
&=\int_0^1\left((1-s)\{CBC\}^{-1}+sU_{C^{-1}}U_{B^{-1}}U_{C^{-1}}U_{C}(A)\right)
^{-1}d\mu(s)\\
&=\int_0^1\left((1-s)\{CBC\}^{-1}+s\{C^{-1}\{B^{-1}AB^{-1}\}C^{-1}\}\right)^{-1}d\mu(s)\\
&=\int_0^1\left((1-s)\{C^{-1}B^{-1}C^{-1}\}+s\{C^{-1}\{B^{-1}AB^{-1}\}C^{-1}\}\right)^{-1}d\mu(s)\\
&=\int_0^1\{C\left((1-s)B^{-1}+s\{B^{-1}AB^{-1}\}\right)^{-1}C\}d\mu(s)\\
&=\left\{\int_0^1C\left((1-s)B^{-1}+s\{B^{-1}AB^{-1}\}\right)^{-1}d\mu(s)C\right\}\\
&=\{C(A\sharp_rB)C\}.
\end{align*}
If $r\in (-1,0)$, then $\{C(A\sharp_r B)C\}=\{CAC\}\sharp_r\{CBC\}$ by Theorem \ref{t2}.
\end{proof}
In the next theorem, we present a converse of theorem 3 in \cite{wang} for $JB$-algebras.
\begin{theorem}\label{t3}
(Reverse Young inequality for $JB$-algebras) Let $A$, $B$ be positive invertible elements in a unital $JB$-algebra $\mathcal{A}$ and $r\in(-1,0)\cup(1,2)$. Then
\begin{equation}\label{15}
A\nabla_r B\leq A\sharp_rB\leq A!_rB.
\end{equation}
\end{theorem}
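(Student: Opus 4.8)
The plan is to reduce the entire statement to a single scalar convexity estimate for $x\mapsto x^{r}$ and then transport it through the quadratic map $U_{A^{1/2}}$ and through inversion. The engine is the elementary tangent-line inequality: for $r\in(-1,0)\cup(1,2)$ the scalar function $x\mapsto x^{r}$ is strictly convex on $(0,\infty)$, since its second derivative $r(r-1)x^{r-2}$ is positive throughout this range. Hence its graph lies above the tangent line at $x=1$, giving
\[
x^{r}\geq 1+r(x-1)=(1-r)+rx,\qquad x>0 .
\]
(Equivalently one may invoke the operator convexity of $x\mapsto x^{r}$ established in Proposition \ref{p1}.)

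First I would prove the left inequality $A\nabla_{r}B\leq A\sharp_{r}B$. I would set $X:=\{A^{-1/2}BA^{-1/2}\}=U_{A^{-1/2}}(B)$, which is positive and invertible because $A,B$ are. Since $\mathrm{Sp}(X)\subset(0,\infty)$, the elements $X^{r}$ and $(1-r)I+rX$ both lie in the associative subalgebra generated by $X$ and $I$, so the displayed scalar inequality passes through the continuous functional calculus to yield the order relation $X^{r}\geq (1-r)I+rX$ in $\mathcal{A}$. I would then apply the positive (hence order-preserving) linear map $U_{A^{1/2}}$ to both sides, using $U_{A^{1/2}}(I)=A$ together with the identity $U_{A^{1/2}}(X)=U_{A^{1/2}}U_{A^{-1/2}}(B)=B$, which holds because $U_{A^{1/2}}^{-1}=U_{A^{-1/2}}$ by Lemma \ref{l1}. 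This gives
\[
A\sharp_{r}B=U_{A^{1/2}}(X^{r})\geq (1-r)A+rB=A\nabla_{r}B .
\]

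For the right inequality $A\sharp_{r}B\leq A!_{r}B$ I would exploit the left inequality already proved, now applied to the pair $A^{-1},B^{-1}$, which gives $A^{-1}\nabla_{r}B^{-1}\leq A^{-1}\sharp_{r}B^{-1}$. Inversion is order-reversing on positive invertible elements (the map $t\mapsto t^{-1}$ is operator monotone decreasing), so
\[
\bigl(A^{-1}\sharp_{r}B^{-1}\bigr)^{-1}\leq \bigl(A^{-1}\nabla_{r}B^{-1}\bigr)^{-1}.
\]
By Proposition \ref{p2}(iv) the left-hand side equals $A\sharp_{r}B$, while $\bigl(A^{-1}\nabla_{r}B^{-1}\bigr)^{-1}=\bigl((1-r)A^{-1}+rB^{-1}\bigr)^{-1}=A!_{r}B$ by the definition of the harmonic mean. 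Combining the two estimates yields \eqref{15}.

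The hard part will not be the scalar estimate but the faithful transport of the order relations through the Jordan structure: one must verify that $U_{A^{1/2}}$ is order-preserving and correctly evaluates $U_{A^{1/2}}(I)=A$ and $U_{A^{1/2}}(X)=B$ (the latter resting on $U_{A^{1/2}}^{-1}=U_{A^{-1/2}}$ from Lemma \ref{l1}), and that inversion genuinely reverses order in a unital $JB$-algebra. All of these ingredients are available from Lemma \ref{l1}, Lemma \ref{l2}, the positivity of $U_{A}$ on positive elements, and Proposition \ref{p2}; once assembled, both inequalities follow with essentially no further computation.
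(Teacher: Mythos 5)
Your proof is correct, and the left-hand inequality $A\nabla_r B\leq A\sharp_r B$ is obtained exactly as in the paper: a scalar tangent-line (convexity) inequality for $x\mapsto x^r$ with $r\notin(0,1)$, pushed through the functional calculus at $\{A^{-1/2}BA^{-1/2}\}$ and then through the positive linear map $U_{A^{1/2}}$. Where you diverge is the right-hand inequality. The paper takes both bounds simultaneously from the two-sided scalar estimate $(1-r)+rx\leq x^r\leq\bigl((1-r)+rx^{-1}\bigr)^{-1}$ (Lemma 2.1 of Bakherad--Moslehian) and transports the upper bound through $U_{A^{1/2}}$, using Lemma \ref{l2} to identify the image with $A!_rB$. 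You instead apply the already-proved lower bound to the pair $A^{-1},B^{-1}$ and then invert, invoking Proposition \ref{p2}(iv) to identify $\bigl(A^{-1}\sharp_rB^{-1}\bigr)^{-1}$ with $A\sharp_rB$. This buys you a more elementary scalar input (only the one-sided tangent inequality, which follows from $r(r-1)x^{r-2}>0$) at the price of an extra operator-theoretic fact: that inversion is order-reversing on positive invertible elements of a $JB$-algebra. You flag this correctly as the point needing verification; it is true and standard (from $A\leq B$ one gets $\{B^{-1/2}AB^{-1/2}\}\leq I$, hence $\{B^{1/2}A^{-1}B^{1/2}\}\geq I$ by Lemma \ref{l2} and the spectral calculus, hence $A^{-1}\geq B^{-1}$ after applying $U_{B^{-1/2}}$), but the paper never states it explicitly, so you should supply that half-line argument to make the proof self-contained. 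With that addendum both routes are equally rigorous and of comparable length.
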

\begin{proof}
According to Lemma 2.1 in \cite{bak}, if $r\notin(0,1)$,
\begin{align}\label{12}
(1-r)1+rx\leq x^r\leq((1-r)1+rx^{-1})^{-1}
\end{align}
for all $x>0$.
By functional calculus at $\{A^{-\frac{1}{2}}BA^{-\frac{1}{2}}\}$ for $JB$-algebras [1, proposition 1.21], from \eqref{12} we get
\begin{align*}
(1-r)1+r\{A^{-\frac{1}{2}}BA^{-\frac{1}{2}}\}\leq \{A^{-\frac{1}{2}}BA^{-\frac{1}{2}}\}^r\leq\left(1-r)1+r\{A^{-\frac{1}{2}}BA^{-\frac{1}{2}}\}^{-1}\right)^{-1}.
\end{align*}
Since $U_A$ is a linear mapping, so
\begin{align}\label{13}
&U_{A^\frac{1}{2}}\left((1-r)1+r\{A^{-\frac{1}{2}}BA^{-\frac{1}{2}}\}\right)\leq U_{A^\frac{1}{2}}\left(\{A^{-\frac{1}{2}}BA^{-\frac{1}{2}}\}^r\right)\notag\\
&\leq U_{A^\frac{1}{2}}\left(\left(1-r)1+r\{A^{-\frac{1}{2}}BA^{-\frac{1}{2}}\}^{-1}\right)^{-1}\right).
\end{align}
Inequality \eqref{15} follows from \eqref{13}.
\end{proof}
\section{Heinz and Heron means in JB-algebras}\vspace{.2cm} \noindent

For two positive invertible elements $A$, $B$ in a unital $JB$-algebra $\mathcal{A}$ and $\nu\in[0,1]$,
 we introduce Heinz, Heron and logarithmic means, as follows
\begin{align*}
H_{\nu}(A,B)&=\frac{A\sharp_\nu B+A\sharp_{1-\nu}B}{2}\\
F_{\nu}(A,B)&=(1-\nu)A\sharp_\nu B+\nu A\nabla B\\
L(A,B)=&\int_0^1 A\sharp_\nu B d\nu
\end{align*}
where $A\sharp_\nu B=\{A^{1/2}\{A^{-1/2}BA^{-1/2}\}^{\nu}A^{1/2}\}$.
In \cite[Lemma 1]{bah}, Bhatia gave the following inequality between the Heinz and Heron means,
\begin{align}\label{2.00}
H_\nu(a,b)\leq F_{\alpha(\nu)}(a,b),
\end{align}
for $0\leq\nu\leq1$, where $\alpha(\nu)=(1-2\nu)^2$.

By the proof of Lemma 1 of \cite{bah}, we obtain
\begin{align}\label{2.0}
H_\nu(a,b)\geq F_{\alpha(\nu)}(a,b),
\end{align}
where $\nu\notin (0,1)$.
\begin{theorem}\label{t4}
Let $A$, $B$ be positive invertible elements in a unital JB-algebra $\mathcal{A}$. Then
\begin{align}\label{2.1}
H_{\nu}(A,B)&\leq F_{\alpha(\nu)}(A,B)&& \nu\in[0,1]\\
H_{\nu}(A,B)&\geq F_{\alpha(\nu)}(A,B)&&\nu\notin(0,1),\label{2.1a}
\end{align}
where $\alpha(\nu)=(1-2\nu)^2$.
\end{theorem}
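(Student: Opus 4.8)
The plan is to lift the scalar inequalities \eqref{2.00} and \eqref{2.0} to $\mathcal{A}$ by functional calculus, exploiting that every term occurring in $H_\nu$ and $F_{\alpha(\nu)}$ is the image under the positive linear map $U_{A^{1/2}}$ of a continuous function of a single positive invertible element. Put $X=\{A^{-\frac{1}{2}}BA^{-\frac{1}{2}}\}$; since $A$ and $B$ are positive and invertible, $X$ is positive and invertible, so $Sp(X)\subset(0,\infty)$. First I would record the representations
\begin{align*}
A\sharp_\nu B=U_{A^{1/2}}(X^\nu),\qquad A\sharp_{1-\nu}B=U_{A^{1/2}}(X^{1-\nu}),
\end{align*}
which are immediate from \eqref{4}, together with the identities $U_{A^{1/2}}(X)=B$ and $U_{A^{1/2}}(I)=A$; the former holds because $X=U_{A^{-1/2}}(B)$ and, by Lemma \ref{l1}, $U_{A^{1/2}}U_{A^{-1/2}}$ is the identity map, and the latter follows from the definition of $U_{A^{1/2}}$. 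Consequently $A\nabla B=\frac{A+B}{2}=U_{A^{1/2}}\left(\frac{I+X}{2}\right)$, and by the linearity of $U_{A^{1/2}}$ we obtain
\begin{align*}
H_\nu(A,B)&=U_{A^{1/2}}\left(\frac{X^\nu+X^{1-\nu}}{2}\right),\\
F_{\alpha(\nu)}(A,B)&=U_{A^{1/2}}\left((1-\alpha(\nu))X^{\alpha(\nu)}+\alpha(\nu)\frac{I+X}{2}\right).
\end{align*}

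Next I would read the scalar inequalities at the pair $(1,x)$: evaluating \eqref{2.00} and \eqref{2.0} with $a=1$ and $b=x$ yields, for every $x>0$,
\begin{align*}
\frac{x^\nu+x^{1-\nu}}{2}\leq(1-\alpha(\nu))x^{\alpha(\nu)}+\alpha(\nu)\frac{1+x}{2}\qquad(\nu\in[0,1]),
\end{align*}
and the reversed inequality when $\nu\notin(0,1)$. Both sides are continuous functions of $x$ on $(0,\infty)$, hence on $Sp(X)$. Applying the order-preserving functional calculus for $JB$-algebras (\cite[Proposition 1.21]{1}) at $X$ turns these pointwise scalar inequalities into
\begin{align*}
\frac{X^\nu+X^{1-\nu}}{2}\leq(1-\alpha(\nu))X^{\alpha(\nu)}+\alpha(\nu)\frac{I+X}{2}
\end{align*}
for $\nu\in[0,1]$, with the reversed inequality for $\nu\notin(0,1)$.

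Finally I would apply $U_{A^{1/2}}$ to both sides. Since $U_C$ sends positive elements to positive elements and is linear, it is order preserving; combined with the representations of the first step, applying it to the two displays above produces exactly \eqref{2.1} and \eqref{2.1a}. The whole argument is a transport of the scalar estimates, so the only point demanding genuine care is the first step, namely the identities $U_{A^{1/2}}(X)=B$, $U_{A^{1/2}}(I)=A$ and the resulting formula $A\nabla B=U_{A^{1/2}}\left(\frac{I+X}{2}\right)$, which rest on Lemma \ref{l1} and the linearity of $U_{A^{1/2}}$. I do not anticipate a real obstacle beyond this bookkeeping, because the analytic substance of the theorem has already been absorbed into the scalar inequalities \eqref{2.00} and \eqref{2.0}; note in particular that no operator convexity of $x\mapsto x^{\alpha(\nu)}$ is required, only the availability of the continuous functional calculus at the positive invertible element $X$.
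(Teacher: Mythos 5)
Your overall strategy --- specialize the scalar inequalities \eqref{2.00} and \eqref{2.0} at $(1,x)$, apply the order-preserving functional calculus at $X=\{A^{-\frac{1}{2}}BA^{-\frac{1}{2}}\}$, and then push everything through the positive linear map $U_{A^{1/2}}$ --- is exactly the paper's proof, and the bookkeeping you flag (the identities $U_{A^{1/2}}(X)=B$, $U_{A^{1/2}}(I)=A$, hence $H_\nu(A,B)=U_{A^{1/2}}\bigl(\tfrac{X^\nu+X^{1-\nu}}{2}\bigr)$) is correct. However, there is a genuine error in your scalar step: the inequality
\begin{equation*}
\frac{x^\nu+x^{1-\nu}}{2}\leq(1-\alpha(\nu))x^{\alpha(\nu)}+\alpha(\nu)\frac{1+x}{2}
\end{equation*}
is false. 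Take $\nu=\tfrac12$, so $\alpha(\nu)=0$: the left side is $x^{1/2}$ while the right side is the constant $1$, and the claim fails for every $x>1$. The source of the slip is that you took the paper's displayed definition $F_{\nu}(A,B)=(1-\nu)A\sharp_\nu B+\nu A\nabla B$ literally; the subscript on $\sharp_\nu$ there is a typo. Bhatia's Lemma 1 in \cite{bah}, which is what \eqref{2.00} cites, concerns the standard Heron mean $F_\alpha(a,b)=(1-\alpha)\sqrt{ab}+\alpha\frac{a+b}{2}$, and the paper's own proof (see \eqref{2.2}) and its later computation of $L(A,B)$ both use $F_\nu(A,B)=(1-\nu)(A\sharp B)+\nu(A\nabla B)$ with the unweighted geometric mean $A\sharp B=A\sharp_{1/2}B$. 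Indeed, under your literal reading the theorem itself would be false: at $\nu=\tfrac12$ it would assert $A\sharp_{1/2}B\leq A$.

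The fix is purely local: replace $x^{\alpha(\nu)}$ by $x^{1/2}$ in the scalar inequality (equivalently, replace $X^{\alpha(\nu)}$ by $X^{1/2}$ in your representation of $F_{\alpha(\nu)}(A,B)$, so that $U_{A^{1/2}}(X^{1/2})=A\sharp B$). The corrected scalar inequality
\begin{equation*}
\frac{x^\nu+x^{1-\nu}}{2}\leq(1-\alpha(\nu))x^{1/2}+\alpha(\nu)\frac{1+x}{2}
\end{equation*}
for $\nu\in[0,1]$, with the reverse for $\nu\notin(0,1)$, is exactly what \eqref{2.00} and \eqref{2.0} give at $(1,x)$, and with that substitution the remainder of your argument (functional calculus at $X$, then order preservation of $U_{A^{1/2}}$) goes through verbatim and coincides with the paper's proof. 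Your closing observation that no operator convexity is needed, only the continuous functional calculus, is also correct and consistent with the paper.
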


\begin{proof}
Let $\nu\in[0,1]$, from \eqref{2.00} for $x>0$, we get
\begin{align}\label{2.2}
\frac{x^\nu+x^{1-\nu}}{2}\leq (1-\alpha(\nu))x^\frac{1}{2}+\alpha(\nu)\frac{x+1}{2}.
\end{align}
By functional calculus at $\{A^{-\frac{1}{2}}BA^{-\frac{1}{2}}\}$ for $JB$-algebras [1, proposition 1.21], from \eqref{2.2} we get
\begin{align}\label{2.3}
&\frac{\{A^{-\frac{1}{2}}BA^{-\frac{1}{2}}\}^\nu +\{A^{-\frac{1}{2}}BA^{-\frac{1}{2}}\}^{1-\nu}}{2}\notag\\
&\leq (1-\alpha(\nu))\{A^{-\frac{1}{2}}BA^{-\frac{1}{2}}\}^\frac{1}{2}+\alpha(\nu)\frac{\{A^{-\frac{1}{2}}BA^{-\frac{1}{2}}\}+I}{2}.
\end{align}
Since $U_A$ is a linear mapping, so
\begin{align}\label{2.4}
&\frac{U_{A^\frac{1}{2}}(\{A^{-\frac{1}{2}}BA^{-\frac{1}{2}}\}^\nu) +U_{A^\frac{1}{2}}(\{A^{-\frac{1}{2}}BA^{-\frac{1}{2}}\}^{1-\nu})}{2}\notag\\
&\leq (1-\alpha(\nu))U_{A^\frac{1}{2}}(\{A^{-\frac{1}{2}}BA^{-\frac{1}{2}}\}^\frac{1}{2})+\alpha(\nu)\frac{U_{A^\frac{1}{2}}(\{A^{-\frac{1}{2}}BA^{-\frac{1}{2}}\})+A}{2}.
\end{align}
In this case, we get $H_{\nu}(A,B)\leq F_{\alpha(\nu)}(A,B)$. Similarly, for $\nu\notin(0,1)$, inequality \eqref{2.1a} follows from \eqref{2.0}.
\end{proof}

 From \eqref{2.1}, we deduced
\begin{align*}
L(A,B)&=\int_0^1\{A^{\frac{1}{2}}\left(A^{-\frac{1}{2}}BA^{-\frac{1}{2}}\right)^rA^{\frac{1}{2}}\}dt\\
&=\int_0^1H_t(A,B)dt
\leq\int_0^1 F_{\alpha(t)}(A,B)dt\\
&=\int_0^14(t-t^2)(A\sharp B)+(1-4(t-t^2))(\frac{A+B}{2})dt\\
&=\frac{2}{3}(A\sharp B)+\frac{1}{3}\left(\frac{A+B}{2}\right)\\
&=F_{\frac{1}{3}}(A,B).
\end{align*}
It is clear that if $\nu_1\leq\nu_2$ then $F_{\nu_1}(A,B)\leq F_{\nu_2}(A,B)$. Therefore,
\begin{align}\label{2.5}
L(A,B)\leq F_{\beta}(A,B)\leq H_\gamma(A,B),
\end{align}
where $\frac{1}{3}\leq\beta\leq1\leq \gamma$.

\end{document}